\newtheorem{theorem}{Theorem}[section]
\newtheorem{prop}[theorem]{Proposition}
\newtheorem{cor}[theorem]{Corollary}
\theoremstyle{definition}
\theoremstyle{remark}
\numberwithin{equation}{section}
\newcommand{\RR}{{\mathbb R}}
\newcommand{\CC}{{\mathbb C}}
\newcommand{\out}[1]{\ }
\let\PSH=\psh
\let\cal=\mathcal
\renewcommand{\phi}{\varphi}
\begin{document}

\title[A note on plurifinely open sets]{A note on the 
structure of plurifinely open sets and the 
equality of some complex Monge-Amp\`ere measures}

\author[M. El Kadiri]{Mohamed El Kadiri}
\address{University of Mohammed V
\\Department of Mathematics,
\\Faculty of Science
\\P.B. 1014, Rabat
\\Morocco}
\email{elkadiri30@hotmail.com}



\subjclass[2010]{31D05, 31C35, 31C40.}
\keywords{Plurisubharmonic function, Plurifine topology, Plurifinely open set,
Monge-Amp\`ere operator, Monge-Amp\`ere measures.}

\begin{abstract} In a recent preprint published on arXiv 
(see arXiv:2308.02993v2, referred here as \cite{NXH}),
N.X. Hong stated that every plurifinely open set $U\subset \CC^n$, $n\geq 1$, is of the form
$U=\bigcup \{\varphi_j>-1\}$, where each $\phi_j$ is a negative plurisubharmonic function
defined on an open ball $B_j\subset \CC^n$  and used this result
to prove an equality result on complex Monge-Amp\`ere measures. 
Unfortunately, this result is wrong as we will see below.
\end{abstract}
\maketitle

\section{Introduction} 
The plurifine topology on an open set $\Omega\subset 
\CC^n$, $n\geq 1$, is the coarsest topology on $\Omega$ 
that makes continuous all plurisubharmonic functions 
on $\Omega$. For properties of this topology we refer the reader to 
\cite{BT}. An open set relative to this topology is called a 
plurifinely open set. 
In a recent preprint on arXiv, cf. \cite{NXH}, N.X Hong gave a 
description of plurifinely open sets in $\CC^n$ ($n\geq 2$) 
and applied it to establish a result on complex Monge-Amp\`ere 
measures. More precisely, he stated the following results:

\begin{theorem}[{\cite[Theorem 1.1]{NXH}}]\label{thm1.1}
Let $\Omega$ be a subset of $\CC^n$ and let $\{z_j\}\subset \Omega$ 
be such that $\overline {\{z_j\}} =\Omega$. Then, $\Omega$ is
plurifinely open if and only if there exists a sequence of negative 
plurisubharmonic functions $\varphi_{j,k})$ in $B(z_j, 2^{-k})$ such that
$$\Omega=\bigcup_{j,k=1}\{\varphi_{j,k} >-1\}.$$
\end{theorem}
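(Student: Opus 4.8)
The plan is to handle the two implications separately, treating the nontrivial direction through the thinness description of the plurifine topology. For the \emph{sufficiency} direction I would argue directly: each $\varphi_{j,k}$, being plurisubharmonic on $B(z_j,2^{-k})$, is plurifinely continuous there, so $\{\varphi_{j,k}>-1\}$ is plurifinely open; since the plurifine topology is a genuine topology, the union $\bigcup_{j,k}\{\varphi_{j,k}>-1\}$ is plurifinely open, whence $\Omega$ is plurifinely open. This direction is immediate.

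For the \emph{necessity} direction I would start pointwise. Fix $w\in\Omega$; since $\Omega$ is a plurifine neighbourhood of $w$, its complement $E=\CC^n\setminus\Omega$ is pluri-thin at $w$, so by the standard characterization of the plurifine topology (see \cite{BT}) there are a ball $B(w,r)$ and a plurisubharmonic $u$ on it with $\limsup_{z\to w,\,z\in E}u(z)<u(w)$. Shrinking $r$ so that $u$ is bounded above by some constant $A$ while $u<u(w)-\delta$ on $E\cap B(w,r)$, an affine normalization $\varphi_w=a(u-A)$ with $1/(\delta+A)\le a<1/A$ yields a \emph{negative} plurisubharmonic function on $B(w,r)$ satisfying $w\in\{\varphi_w>-1\}$ and $\{\varphi_w>-1\}\subseteq\Omega$. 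Thus around every point of $\Omega$ there is a local representation of the required shape.

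Next I would pass to a countable family. Given the ball $B(w,r)$ above, density of $\{z_j\}$ lets me choose $j,k$ with $z_j\in B(w,2^{-k})$ and $2^{-k+1}<r$; then $w\in B(z_j,2^{-k})\subseteq B(w,r)$, so $\varphi_w$ restricts to $B(z_j,2^{-k})$. The index set of pairs $(j,k)$ is countable, so I would group the (uncountably many) local functions: for each fixed $(j,k)$ let $\Phi_{j,k}$ be the pointwise supremum of all the negative functions $\varphi_w$ assigned to that pair, and set $\varphi_{j,k}=\Phi_{j,k}^{*}$, its upper semicontinuous regularization, which is negative and plurisubharmonic on $B(z_j,2^{-k})$. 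By construction every $w\in\Omega$ lies in some $\{\Phi_{j,k}>-1\}\subseteq\{\varphi_{j,k}>-1\}$, so $\Omega\subseteq\bigcup_{j,k}\{\varphi_{j,k}>-1\}$.

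The step I expect to be the real obstacle is the reverse inclusion $\bigcup_{j,k}\{\varphi_{j,k}>-1\}\subseteq\Omega$. Each individual $\{\varphi_w>-1\}$ sits inside $\Omega$, and hence so does $\{\Phi_{j,k}>-1\}=\bigcup_w\{\varphi_w>-1\}$; but passing to the regularization $\varphi_{j,k}=\Phi_{j,k}^{*}\ge\Phi_{j,k}$ can only be controlled up to the negligible set $\{\Phi_{j,k}^{*}>\Phi_{j,k}\}$, which is pluripolar by the Bedford--Taylor theorem. Consequently one obtains a priori only $\Omega\subseteq\bigcup_{j,k}\{\varphi_{j,k}>-1\}\subseteq\Omega\cup P$ for some pluripolar set $P$, and there is no evident mechanism forcing the extra pluripolar points to vanish. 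I therefore expect the equality with $\Omega$ to be exactly where the argument breaks down, and — consistently with the stated aim of this note — where a counterexample should be sought.
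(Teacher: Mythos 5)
You are right that this statement cannot be proved, and indeed the paper's treatment of Theorem~\ref{thm1.1} is a refutation, not a proof. Your sufficiency direction is correct (and is the only true half of the equivalence), and your diagnosis that the necessity direction collapses when passing from the uncountable family $\{\varphi_w\}$ to a countable one is a fair account of why the natural attempt fails: the regularization $\Phi_{j,k}^{*}$ can only be controlled up to a pluripolar set, so your construction yields $\Omega\subseteq\bigcup_{j,k}\{\varphi_{j,k}>-1\}\subseteq\Omega\cup P$ with $P$ pluripolar, i.e.\ the representation holds only up to a pluripolar error. But this only shows that one particular construction does not obviously work; it leaves open whether a cleverer choice of the $\varphi_{j,k}$ might succeed.

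The paper closes that door with a structural obstruction that your analysis misses: every set of the form $\bigcup_{j,k}\{\varphi_{j,k}>-1\}$ is a countable union of Borel sets (each $\varphi_{j,k}$ is upper semicontinuous, hence Borel measurable), so it is Borel; yet there exist plurifinely open sets that are not Borel. Concretely, an uncountable polar set $E\subset\CC$ has cardinality $\mathfrak{c}$ and hence $2^{\mathfrak{c}}$ subsets, all of them polar, while $\CC$ has only $\mathfrak{c}$ Borel subsets; so some polar $F\subset E$ is not Borel, and $U=\CC\setminus F$ is finely open (polar subsets of $\CC$ are finely closed) but not Borel. For $n\geq 2$ one takes $U^n$ or $U\times\CC^{n-1}$, which are plurifinely open and, by pulling back along the continuous diagonal map $z\mapsto(z,\dots,z)$, are seen not to be Borel. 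So the correct conclusion is not merely that your argument has a gap at the regularization step, but that no argument can close it: the ``only if'' implication is false, and the pluripolar defect in your construction is unavoidable.
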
 

\begin{cor}
Every plurifinely open set is Borel.
\end{cor}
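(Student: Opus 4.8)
The plan is to feed the representation furnished by Theorem~\ref{thm1.1} directly into the elementary closure properties of the Borel $\sigma$-algebra. By that theorem, a plurifinely open set $\Omega\subset\CC^n$ can be written as
$$\Omega=\bigcup_{j,k}\{\varphi_{j,k}>-1\},$$
where each $\varphi_{j,k}$ is a negative plurisubharmonic function on the ball $B(z_j,2^{-k})$. Since the index pair $(j,k)$ ranges over $\NN\times\NN$, this union is \emph{countable}; hence it will suffice to show that each individual superlevel set $\{\varphi_{j,k}>-1\}$ is Borel, after which the Borel property of $\Omega$ follows from stability of the Borel $\sigma$-algebra under countable unions.

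First I would exploit upper semicontinuity. Every plurisubharmonic function is upper semicontinuous, so for each real $c$ the sublevel set $\{\varphi_{j,k}<c\}$ is open in $B(z_j,2^{-k})$, equivalently $\{\varphi_{j,k}\geq c\}$ is relatively closed. Writing
$$\{\varphi_{j,k}>-1\}=\bigcup_{m=1}^{\infty}\Bigl\{\varphi_{j,k}\geq -1+\tfrac1m\Bigr\}$$
then exhibits this set as a countable union of relatively closed subsets of the ball, i.e.\ as an $F_\sigma$ set relative to $B(z_j,2^{-k})$.

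The one point deserving a word of care — and which I expect to be the only genuine obstacle, mild as it is — is that ``closed'' here means closed in the relative topology of the ball rather than in $\CC^n$. This is settled by noting that a relatively closed subset $F$ of the open set $B(z_j,2^{-k})$ has the form $F=\widetilde F\cap B(z_j,2^{-k})$ for some $\widetilde F$ closed in $\CC^n$, and the intersection of a closed set with an open set is Borel in $\CC^n$. Consequently each $\{\varphi_{j,k}>-1\}$ is Borel in $\CC^n$, and the countable union $\Omega=\bigcup_{j,k}\{\varphi_{j,k}>-1\}$ is Borel, which is the assertion.
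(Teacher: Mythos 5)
Your derivation is internally coherent as a deduction from Theorem~\ref{thm1.1}: the index set is countable, each set $\{\varphi_{j,k}>-1\}$ is an $F_\sigma$ relative to the ball by upper semicontinuity and hence Borel in $\CC^n$, and countable unions of Borel sets are Borel. Indeed this is essentially how the corollary is meant to follow in \cite{NXH}. The problem is that you have taken Theorem~\ref{thm1.1} as a usable premise, whereas the entire point of this paper is that Theorem~\ref{thm1.1} is \emph{false}, and the corollary is false with it. The paper does not prove the statement you were asked about; it disproves it, and your argument is in effect a demonstration that the corollary stands or falls with Theorem~\ref{thm1.1} --- which is precisely why the paper attacks the corollary by counterexample in order to refute the theorem.

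Concretely, the paper's counterexample runs as follows. There exist uncountable polar sets $E\subset\CC$; since $\#\mathcal{B}(\CC)=\mathfrak{c}<2^{\mathfrak{c}}=\#\mathfrak{P}(E)$, some subset $F\subset E$ fails to be Borel. Every subset of a polar set is polar, and for $n=1$ polar sets are finely closed, so $U=\CC\setminus F$ is a finely open subset of $\CC$ that is not Borel --- already a counterexample for $n=1$. For $n\geq 2$, the set $U^n$ (or $U\times\CC^{n-1}$) is plurifinely open in $\CC^n$ but not Borel, since otherwise its preimage under the continuous diagonal map $z\mapsto(z,\dots,z)$ would make $U$ Borel. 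So no proof of the corollary can succeed, and the specific gap in yours is the reliance on the representation $\Omega=\bigcup_{j,k}\{\varphi_{j,k}>-1\}$, which a general plurifinely open set simply does not admit.
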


\begin{prop}[{\cite[Proposition 2.2]{NXH}}]\label{prop1.3}
Let $D$ be a Euclidean open set and let $\PSH(D)$ be the 
set of all plurisubharmonic functions in $D$. Then, for 
every plurifinely open set $\Omega$, the function
$$u := \sup\{\varphi\in \PSH(D) : \varphi\leq  -1 \text{ on } D\setminus \Omega 
\text{ and } \varphi \leq 0  \text{ in } D\}$$
is plurisubharmonic in $D$ and
$u = -1$  on $D\setminus \Omega$.
\end{prop}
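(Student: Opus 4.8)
The plan is to separate the two assertions, since the bounds and the value on the complement are elementary and all the difficulty sits in plurisubharmonicity. The constant function $-1$ belongs to the competing family, so the family is nonempty and $u\ge -1$, while every competitor is $\le 0$, giving $-1\le u\le 0$ throughout $D$. For the complement, observe that if $z\in D\setminus\Omega$ then every admissible $\varphi$ satisfies $\varphi(z)\le -1$, so $u(z)\le -1$; combined with $u\ge -1$ this yields $u=-1$ on $D\setminus\Omega$ immediately. Thus the entire content of the proposition is the claim that $u\in\PSH(D)$.

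For that I would run the usual upper-envelope machinery. By Choquet's lemma one extracts a countable subfamily whose supremum has the same upper semicontinuous regularization $u^{*}$, and by the classical upper-envelope theorem $u^{*}\in\PSH(D)$ with $-1\le u^{*}\le 0$ and $u\le u^{*}$. To promote $u$ itself to a plurisubharmonic function it would suffice to prove $u=u^{*}$ on all of $D$, equivalently that $u^{*}$ is itself an admissible competitor, i.e. that $u^{*}\le -1$ on $D\setminus\Omega$. The Bedford--Taylor theorem on negligible sets gives $u=u^{*}$ off a pluripolar set $N$, so $u^{*}=-1$ on $(D\setminus\Omega)\setminus N$ for free; the whole burden is then to extend this equality to the points of $(D\setminus\Omega)\cap N$.

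Here is where I expect the argument to be hardest, and in fact where I suspect it breaks. The natural idea is to exploit the plurifine continuity of the plurisubharmonic function $u^{*}$: if each $z_{0}\in(D\setminus\Omega)\cap N$ were a plurifine limit of points of $(D\setminus\Omega)\setminus N$, then $u^{*}(z_{0})=-1$ would follow by continuity along that approach, and the plurifine openness of $\Omega$ is precisely the hypothesis that one hopes will supply such an approach. The obstacle is that a pluripolar set is plurithin at every point of $\CC^{n}$, so when the \emph{whole} complement $D\setminus\Omega$ is thin --- most strikingly when it is itself pluripolar --- each of its points is plurifinely isolated within it, and there are simply no nearby points of $(D\setminus\Omega)\setminus N$ through which to approach $z_{0}$; the continuity argument then has nothing to bite on. Before committing to a proof I would therefore test the statement against the simplest such complement: take $\Omega=D\setminus\{a\}$ for a single point $a$. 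Then $u(a)=-1$, while for $z\ne a$ one pushes a small multiple of $\log\abs{z-a}$ up to obtain $u(z)=0$, so $u$ fails to be upper semicontinuous at $a$ and cannot be plurisubharmonic. This strongly suggests that the crux step cannot be carried out in the stated generality, and that any correct version must either forbid the complement from carrying pluripolar pieces or replace $u$ by its regularization $u^{*}$.
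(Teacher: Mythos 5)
Your conclusion is the right one, and it is exactly the paper's: this proposition (Hong's Proposition~2.2) is \emph{false}, and Section~3 of the note is devoted to refuting it rather than proving it. Your diagnosis of where the upper-envelope argument breaks --- one cannot propagate $u^{*}\leq -1$ from $(D\setminus\Omega)\setminus N$ to the pluripolar exceptional set $N$ when $D\setminus\Omega$ is plurithin at the point in question --- is precisely the correct obstruction, and your counterexample $\Omega=D\setminus\{a\}$ (with $D$ a bounded ball and competitors $c\log(|z-a|/R)$, $c\downarrow 0$) is valid: $u=0$ off $a$ while $u(a)=-1$, so $u$ is not upper semicontinuous and hence not in $\PSH(D)$. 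The paper's counterexample is built on the same principle but in greater generality: it takes $\Omega=U\cap B(z,r)$ for an irregular bounded planar domain $U$ and an irregular boundary point $z$, notes that $u=-1$ on $D\setminus\Omega$ by definition while $-u^{*}$ is the balayage ${\widehat R}_1^{D\setminus\Omega}$, and invokes the criterion for irregularity to get ${\widehat R}_1^{D\setminus\Omega}(z)<1$, i.e.\ $u(z)=-1<u^{*}(z)$. Your example is the degenerate instance of this (a puncture is the simplest irregular boundary point) and is more elementary, needing no reduced-function machinery; the paper's version buys the information that the failure is not tied to pluripolar complements but occurs whenever $D\setminus\Omega$ is thin at one of its own points. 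In particular your closing guess that a correct version might only need to forbid ``pluripolar pieces'' in the complement is too optimistic, while your alternative fix --- replacing $u$ by its regularization $u^{*}$ --- is the right one.
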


\begin{theorem}\label{thm1.4}
Let $D$ be a bounded hyperconvex domain and let 
$\Omega$ be a plurifinely open subset of $D$. Assume that 
$u_0, \cdots . . . , u_n \in \cal E(D)$ such that $u_0 = u_1$ on $\Omega$. 
Then
\begin{equation}\label{eq1.2} 
dd^cu_0 \wedge T = ddcu_1 \wedge T
\end{equation} 
on $\Omega$. Here $T := dd^cu_2 \wedge . . . \wedge dd^cu_n.$
\end{theorem}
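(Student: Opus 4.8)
The plan is to abandon the (incorrect) Theorem~\ref{thm1.1} completely and to derive \eqref{eq1.2} from the locality of the complex Monge--Amp\`ere operator in the plurifine topology, a property whose proof is insensitive to how $\Omega$ is built up. Throughout, \eqref{eq1.2} is read as the equality of the Borel measures $\mathbf 1_\Omega\, dd^c u_0\wedge T$ and $\mathbf 1_\Omega\, dd^c u_1\wedge T$ on $D$, which is legitimate because $\Omega$ is plurifinely open and hence Borel (cf.\ \cite{BT}).

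The first step is the bounded case, due to Bedford and Taylor: if $v_0,v_1,v_2,\dots,v_n$ are \emph{locally bounded} plurisubharmonic functions on $D$ and $v_0=v_1$ on the plurifinely open set $\Omega$, then
\begin{equation*}
\mathbf 1_\Omega\, dd^c v_0\wedge dd^c v_2\wedge\cdots\wedge dd^c v_n
=\mathbf 1_\Omega\, dd^c v_1\wedge dd^c v_2\wedge\cdots\wedge dd^c v_n .
\end{equation*}
This is exactly the plurifine locality of $dd^c(\cdot)\wedge T$ for bounded weights, and it uses no structural description of $\Omega$. I would then reduce the general statement to this one by truncation. For each $i$ set $u_i^{(k)}:=\max(u_i,-k)$; these are bounded plurisubharmonic functions decreasing to $u_i$, and since truncation acts pointwise, the hypothesis $u_0=u_1$ on $\Omega$ forces $u_0^{(k)}=u_1^{(k)}$ on $\Omega$ for every $k$. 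Applying the bounded case with $v_i=u_i^{(k)}$ gives, writing $T^{(k)}:=dd^c u_2^{(k)}\wedge\cdots\wedge dd^c u_n^{(k)}$,
\begin{equation*}
\mathbf 1_\Omega\, dd^c u_0^{(k)}\wedge T^{(k)}
=\mathbf 1_\Omega\, dd^c u_1^{(k)}\wedge T^{(k)}\qquad(k\in\NN).
\end{equation*}

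The last step, and the one I expect to be the real obstacle, is to let $k\to\infty$. By the convergence theory for the Monge--Amp\`ere operator on $\cal E(D)$ the measures $dd^c u_0^{(k)}\wedge T^{(k)}$ and $dd^c u_1^{(k)}\wedge T^{(k)}$ converge weakly to $dd^c u_0\wedge T$ and $dd^c u_1\wedge T$. The difficulty is that $\mathbf 1_\Omega$ is not Euclidean-continuous, so the displayed equalities cannot simply be pushed through a weak limit; the weak limit of measures carried by a plurifinely closed complement need not remain carried there. To control this I would replace weak convergence by convergence in capacity together with the quasicontinuity of plurisubharmonic functions. Concretely, on the plurifinely open sets $E_k:=\bigcap_{i=0}^n\{u_i>-k\}$ every truncation $u_i^{(j)}$ with $j\ge k$ agrees with $u_i$, so by the bounded locality of the first step the restricted measures $\mathbf 1_{E_k}\, dd^c u_i^{(j)}\wedge T^{(j)}$ are stationary in $j\ge k$; identifying this stationary value with $\mathbf 1_{E_k}\, dd^c u_i\wedge T$ is precisely the capacitary point at issue, and $\Omega\cap E_k$ being again plurifinely open with $u_0=u_1$ on it would then yield \eqref{eq1.2} off the set $P:=\bigcup_{i}\{u_i=-\infty\}$.

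What remains genuinely delicate is the pluripolar set $P$, which the exhaustion by $E_k$ discards: for general $u_i\in\cal E(D)$ the measures $dd^c u_i\wedge T$ may well charge $P$, so equality on $\Omega\cap P$ does not follow from the argument above. Since $u_0=u_1$ on $\Omega$ forces $\{u_0=-\infty\}\cap\Omega=\{u_1=-\infty\}\cap\Omega$, I expect the equality of the two pluripolar parts on $\Omega$ to come from the theory of Monge--Amp\`ere measures concentrated on pluripolar sets, again exploiting plurifine locality rather than any decomposition of $\Omega$. This combined limiting-and-pluripolar analysis, carried out within the plurifine potential theory of $\cal E(D)$, is where the substance of the proof lies, and it shows that \eqref{eq1.2} survives the failure of Theorem~\ref{thm1.1}.
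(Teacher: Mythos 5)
There is a genuine gap --- in fact two, and one of them is the very point of this paper. First, your framing step asserts that $\Omega$ ``is plurifinely open and hence Borel (cf.\ \cite{BT})''; Section~2 of the paper is devoted to refuting exactly this statement by exhibiting a plurifinely open set that is not Borel. Since the functions $u_i\in\cal E(D)$ may have Monge--Amp\`ere measures charging pluripolar sets, and a general plurifinely open set differs from a Borel set precisely by a subset of a pluripolar set, the expression $\mathbf 1_\Omega\, dd^cu_0\wedge T$ is not even well defined for general $\Omega$ without further argument. Second, and more importantly, the two steps you yourself flag as ``the real obstacle'', ``precisely the capacitary point at issue'' and ``where the substance of the proof lies'' --- namely passing the indicator $\mathbf 1_\Omega$ through the limit $k\to\infty$, and handling the pluripolar part $P=\bigcup_i\{u_i=-\infty\}$ --- are left entirely unproved. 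A plan that defers its substance is not a proof, and these are not routine deferrals: they are exactly the difficulties that force the paper to abandon Theorem~\ref{thm1.4} in its stated generality.

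You should also be aware that the paper does \emph{not} prove Theorem~\ref{thm1.4}. It states explicitly that the theorem ``is not sure'' because Hong's proof rests on the false Theorem~\ref{thm1.1}, and it proves only the weakened Theorem~\ref{thm1.5}, which adds the structural hypothesis $\Omega=\bigcup_j\{\varphi_j>0\}$ with $\varphi_j$ negative plurisubharmonic on balls; the general case is deferred to a forthcoming paper. The reason this hypothesis matters is visible in the paper's proof: the passage to the limit is effected by multiplying the measures by the quasicontinuous function $\varphi$, which vanishes on $D\setminus\Omega$, converting ``equality on $\Omega$'' into an equality of measures on all of $D$ to which Cegrell's convergence theorem (\cite[Corollary 5.2]{Ce}) applies. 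For a general plurifinely open $\Omega$ no such $\varphi$ exists (that would essentially be Proposition~\ref{prop1.3}, which Section~3 shows is false), and your truncation-plus-exhaustion scheme has no substitute for it. So your route, if it could be completed, would be genuinely stronger than what the paper establishes --- but as written it reproduces the open difficulty rather than resolving it.
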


Note that Theorem 1.1 and Theorem 1.2
in \cite{NXH} are respectively Theorem 1.1 and Theorem 1.2 of the first version
 arXiv:2308.02993v1 of \cite{NXH}.

\section{Theorem 1.1 is wrong}

Theorem \ref{thm1.1} and its corollary are wrong 
as we will show by giving an example of a plurifinely open subset of 
$\CC^n$ that is not a Borel set.

Recall that for $n=1$
pluri-polar  and plurifinely open subsets of $\CC^1=\CC$ are
just the classical polar subsets and finely open
subsets of $\CC\cong \RR^2$. Any polar set $E\subset \CC$ is finely closed 
(in $\CC^n$, $n\geq 2$, this property is not true, a pluripolar set 
is not necessarily plurifinely closed) and
there are polar subsets of $\CC$ that are not countable,
see \cite[I.V.3, p. 59]{D}. Let $E\subset \CC$ be
a non countable polar set and denote by $\cal B(\CC)$ the $\sigma$-algebra of 
Borel subsets of $\CC$. It is well known that 
$\# \cal B(\CC)=\mathfrak{c} <2^{\mathfrak c}=\#\mathfrak P(E)$ (the latter equality
holds because $\#E=\mathfrak c$). It then follows that
$\mathfrak P(E)\nsubseteq \cal B(\CC)$ and, consequently, 
there are subsets of $E$ that are not Borel subsets of $\CC$. Since every subset of $E$ is a polar, 
we conclude that there are  polar subsets of $\CC$ that are not Borel sets. 
In the same we can show that there are pluripolar subsets of $\CC^n$, $n\geq 2$, that are 
not Borel sets.

Let $F$ be a polar subset of $\CC$ that is not a Borel set 
and let $U=\CC\setminus F$. Then $U$ is a
finely open subset of $\CC$ which is not a Borel set.
Let $n\geq 2$, then $U^n$ is a plurifinely open subset of $\CC^n$. 
The function
$f:\CC \rightarrow \CC^n$ defined by $f(z)=(z,\cdots, z)$ is  
Borel measurable (because it is continuous) and hence 
$U^n\subset \CC^n$ is not a Borel set because otherwise 
$U=f^{-1}(U^n)$ would be a Borel subset of $\CC$. 
In the same way $U\times \CC^{n-1}$ is not a Borel subset of $\CC^n$.
We then have proved that Theorem \ref{thm1.1} is wrong for every $n\geq 1$.

\section{Proposition 1.3 is wrong}

Proposition 1.3, on which is based  the proof of the (wrong) theorem \ref{thm1.1},
is also wrong. Here we give a concrete example
showing this. Let $U \subset  \CC$ be
an irregular bounded domain such that
$U\subset B(0,1)$, and let $z\in \partial U$ be an irregular point.
For every (Euclidean) domain $O\subset B(0,1)$ and every $A\subset O$ we denote by
${^OR}_f^A$ and ${^O\widehat R}_f^A$ 
(or ${R}_f^A$ and ${\widehat R}_f^A$ if there is no risk of confusion) 
respectively, the reduced and the balayage of
a function $f:O\rightarrow [0,+\infty]$ relative to $O$ (see \cite[p. 129]{AG}), 
and by $\cal S_+(O)$ the cone of nonnegative superharmonic 
functions on $O$. Recall that
$$^OR_f^A=\inf\{u\in \cal S_+(O): u\geq f \text{ on } A\}$$
and that ${^O\widehat R}_f^A$ is the l.s.c. regularized function of
$^OR_f^A$, that is, the function defined by
$${^O\widehat R}_f^A(z)=\liminf_{\zeta\to z} {^OR}_f^A(\zeta)$$
for every $z\in O$. It is clear $O$ and $O'$ are two 
open sets such that $O\subset O'\subset B(0,1)$
we have $^OR_f^A\leq {^{O'}R}_f^A|_O$ and $^O{\widehat R}_f^A\leq {^{O'}}{\widehat R}_f^A|_O$.

Since $z$ is an irregular point of $U$, it follows by
\cite[Theorem 7.3.1 (ii)]{AG} that there is an open ball 
$B(z,r)\subset \overline B(z,r)\subset B(0,1)$
such that
$$\widehat R_1^{B(z,r)\setminus U}(z):= {^{B(0,1)}\widehat R}_1^{B(z,r)\setminus U}(z)<1$$
and hence
\begin{equation}\label{eq1}
{^{B(z,r)}}\widehat R_1^{B(z,r)\setminus U\cap B(0,r)}(z)=
{^{B(z,r)}}\widehat R_1^{B(z,r)\setminus U}(z)<1.
\end{equation}

Take $D=B(z,r)$ and $\Omega=U\cap B(z,r)$ and let $u$ be  the function
$$u:=\sup\{\varphi\in \PSH_-(D): u\leq -1 \text{ on } D\setminus U\}.$$ 
We have 
$${^{B(0,r)}}\widehat R_1^{B(z,r)\setminus \Omega}=-u^*$$ 
and by (\ref{eq1}) that $u(z)<u^*(z).$ This proves that Proposition 2.2 is wrong. 

\section{The correct form of Theorem 1.4} 
As it is stated, Theorem \ref{thm1.4} is not sure, because the given proof of this theorem
in \cite{NXH} is based on the wrong theorem \ref{thm1.1}. However, that proof 
is correct if $\Omega$ has the form given in Theorem \ref{thm1.1}. So, Theorem 
\ref{thm1.4} should be stated correctly as follows: 

\begin{theorem}\label{thm1.5}
Let $D\subset \CC^n$ ($n\geq 1$) be a bounded hyperconvex domain and let
$\Omega$ be a plurifinely open subset of $D$ of the form 
$\Omega=\bigcup_j\{\varphi_j>0\}$, where each $\phi_j$ is 
a negative plurisubharmonic function
defined on an open ball $B_j\subset D$. Assume that
$u_0,. . . , u_n \in \cal E(D)$ such that $u_0 = u_1$ on $\Omega$.
Then
\begin{equation}\label{eq1.5}
dd^cu_0 \wedge T = dd^cu_1 \wedge T
\end{equation}
on $\Omega$. Here $T := dd^cu_2 \wedge . . . \wedge dd^cu_n.$
\end{theorem}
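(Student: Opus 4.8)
The plan is to exploit the special structure $\Omega=\bigcup_j G_j$, with $G_j:=\{\varphi_j>-1\}$ and $\varphi_j\in\PSH_-(B_j)$, in order to reduce the global assertion to one on each basic set $G_j$, and then to invoke the locality of the complex Monge-Amp\`ere operator in the plurifine topology. First I would observe that each $G_j$ is a Borel set: since $\varphi_j$ is upper semicontinuous, $G_j=\bigcup_m\{\varphi_j\ge -1+1/m\}$ is an $F_\sigma$ set. Consequently the identity \eqref{eq1.5}, which asserts the equality of the Borel measures $\mathbf 1_\Omega\, dd^cu_0\wedge T$ and $\mathbf 1_\Omega\, dd^cu_1\wedge T$, follows once we know that $dd^cu_0\wedge T=dd^cu_1\wedge T$ on each $G_j$ separately. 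Indeed, any Borel set $A\subset\Omega$ splits into the disjoint Borel pieces $A_j:=A\cap G_j\setminus\bigcup_{i<j}G_i\subset G_j$, so equality of the two measures on every $G_j$ gives $\sum_j (dd^cu_0\wedge T)(A_j)=\sum_j(dd^cu_1\wedge T)(A_j)$, i.e.\ equality on $A$. Note that this is exactly the point at which the special form of $\Omega$ enters, and why it is harmless that a \emph{general} plurifinely open set may fail to be Borel, as shown in Sections~2--3.

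Second, I would fix an index and write $G=\{\varphi>-1\}$ with $\varphi\in\PSH_-(B)$, $B\subset D$ a ball, and show $dd^cu_0\wedge T=dd^cu_1\wedge T$ on $G$. Since $u_0,\dots,u_n\in\mathcal E(D)$ need not be bounded, I would reduce to the bounded case by truncation: set $u_i^s:=\max(u_i,-s)$ for $s>0$, which are bounded plurisubharmonic functions decreasing to $u_i$ as $s\to\infty$, and put $T^s:=dd^cu_2^s\wedge\cdots\wedge dd^cu_n^s$. Because $u_0=u_1$ on $\Omega\supset G$, we have $u_0^s=u_1^s$ on the plurifinely open set $G$ for every $s$. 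As $G=\{\varphi>-1\}$ is a basic plurifine neighbourhood, the Bedford--Taylor locality theorem for bounded plurisubharmonic functions (cf.\ \cite{BT}) applies on it and yields
\begin{equation*}
dd^cu_0^s\wedge T^s=dd^cu_1^s\wedge T^s\quad\text{on }G,\qquad\text{for every }s>0.
\end{equation*}
It is precisely on the sets $\{\varphi>-1\}$ that this plurifine locality is available in elementary form, which is why Hong's argument survives once $\Omega$ is assumed to be of this shape.

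Third, I would let $s\to\infty$. By Cegrell's convergence theorems for the complex Monge-Amp\`ere operator in $\mathcal E(D)$, the measures $dd^cu_0^s\wedge T^s$ converge to $dd^cu_0\wedge T$ and $dd^cu_1^s\wedge T^s$ converge to $dd^cu_1\wedge T$. Passing to the limit in the displayed identity then gives \eqref{eq1.5} on $G$, and hence, by the first step, on all of $\Omega$.

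The main obstacle is this last step. Weak convergence of the truncated Monge-Amp\`ere measures is not by itself enough to transfer the equality from $G$ to the limit, since $G$ is not Euclidean open and the restriction of a measure to a merely Borel set is not continuous under weak convergence. The delicate point is therefore to upgrade Cegrell's convergence to one compatible with restriction to $G$ — for instance by testing against plurifinely continuous functions supported in $G$, or by combining the monotonicity of the truncations with the comparison principle in $\mathcal E(D)$ so that the setwise equality on $G$ is preserved in the limit. Establishing a uniform control of the masses $(dd^cu_i^s\wedge T^s)(G)$ in $s$, using that the $u_i$ belong to $\mathcal E(D)$, is what makes this passage legitimate.
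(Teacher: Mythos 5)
Your overall strategy --- reduce to a single basic set $G=\{\varphi>-1\}$, truncate the $u_k$ to bounded plurisubharmonic functions, apply plurifine locality of the Monge-Amp\`ere operator for bounded functions on $G$, and then pass to the limit --- is the same as the paper's, and your first two steps are sound. (A minor technical difference: the paper truncates with $u_{j,k}=\max\{j\psi,u_k\}$ for a fixed $\psi\in\mathcal E_0(D)$ rather than with constants; this keeps the approximants in $\mathcal E_0(D)$, which is what Cegrell's convergence theorem requires, whereas $\max\{u_k,-s\}$ need not tend to $0$ at $\partial D$.) The real problem is your third step, and you have diagnosed it correctly yourself: weak$^*$ convergence of $dd^cu_0^s\wedge T^s$ does not let you pass the equality of the restrictions to the Borel, non-open set $G$ to the limit, and neither of the remedies you sketch is carried out or would obviously suffice --- in particular, a uniform bound on the masses $(dd^cu_i^s\wedge T^s)(G)$ says nothing about where that mass ends up in the limit. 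As written, the argument is incomplete at exactly the point where all the work lies.

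The idea that closes this gap in the paper is to use the defining function $\varphi$ of $G$ itself as a density. In the paper's normalization $\varphi$ vanishes identically on $D\setminus G$ and is nonzero precisely on $G$, so the equality of the truncated measures on the plurifine set $G$ upgrades to an equality of measures on all of $D$:
$$\varphi\, dd^cu_{j,0}\wedge T_j=\varphi\, dd^cu_{j,1}\wedge T_j\quad\text{on } D.$$
This is now an identity between global measures on the Euclidean domain $D$, and Corollary 5.2 of \cite{Ce} asserts precisely that for such a bounded plurisubharmonic weight $\varphi$ one has $\varphi\, dd^cu_{j,0}\wedge T_j\to\varphi\, dd^cu_0\wedge T$ weakly$^*$ on $D$ along the decreasing approximation. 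Passing to the limit therefore gives $\varphi\, dd^cu_0\wedge T=\varphi\, dd^cu_1\wedge T$ on $D$, and since $\varphi\neq 0$ exactly on $G$ one recovers $dd^cu_0\wedge T=dd^cu_1\wedge T$ on $G$. This multiplication-by-$\varphi$ device is the one ingredient missing from your proposal; with it, your first two steps assemble into a complete proof.
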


\begin{proof}
It suffices to prove the theorem in case $\Omega=\{\varphi>0\}$, 
where $\phi$ is a nonpositive plurisubharmonic function
on an open ball $B_j\subset D$. To do this, we bring from \cite{NXH} 
the part of the proof of Theorem 
\ref{thm1.4} corresponding to this case (which is correct). Without loss
of generality, we may assume that $u_k \in {\cal F}(D)$, $0 \leq k \leq n$. 
Let $\psi \in {\cal E}_0(D)$ and define
$u_{j,k}:= \max\{j\psi, u_k\}$, $k = 0, 1, ..., n.$ 
We easily have 
$${\cal E}_0(\Omega) \ni u_{j,k}\downarrow u_k \text{ as } j\uparrow +\infty, 
\  k = 0, ...,, n$$
and, by the hypotheses,
\begin{equation}\label{eq1.6}
u_{j,0} = u_{j,1} \text{ on } \Omega,  \forall j \geq 1.
\end{equation}
Write $T_j:= dd^cu_{j,2} \wedge ... \wedge dd^cu_{j,n}.$ Since 
the $u_{j,k}$ are bounded plurisubharmonic functions in $D$, 
it follows by Corollary 3.4 in \cite{EKW} that
$$dd^cu_{j,0} \wedge T_j = dd^cu_{j,1} \wedge T_j \text{ on } \Omega.$$ 
and hence 
\begin{equation}\label{eq1.7}
\varphi dd^cu_{j,0} \wedge T_j = \varphi dd^cu_{j,1} \wedge T_j \text{ on } D
\end{equation} 
because $\varphi=0$ on $D\setminus \Omega$. By Corollary 5.2 in \cite{Ce} we have 
$$\varphi dd^c\wedge u_{j,0} \wedge T_j \to \varphi dd^cu_0 \wedge T$$ 
and 
$$\varphi dd^c\wedge u_{j,1} \wedge T_j \to \varphi dd^cu_1 \wedge T$$ 
weakly* on $D$ as $j \to +\infty$. Hence, by (\ref{eq1.7}), 
$$\varphi dd^cu_0 \wedge T = \varphi dd^cu_1 \wedge T \text{ on } D,$$
and therefore
$$dd^cu_0 \wedge T = dd^cu_1 \wedge T \text{ on } \Omega.$$
\end{proof}

Theorem \ref{thm1.5} is a partial extension of \cite[Theorem 4.5]{EK}. 
In a forthcoming paper we shall extend Theorem \ref{thm1.5} to every 
plurifnely open subset of a given bounded hyperconvex domain $D\subset \CC^n$.

\end{document}